\newtheorem{theorem}{Theorem}
\newtheorem{proposition}[theorem]{Proposition}
\newtheorem{corollary}[theorem]{Corollary}
\newtheorem{lemma}[theorem]{Lemma}
\theoremstyle{definition}
\newtheorem{definition}[theorem]{Definition}
\newtheorem{remark}[theorem]{Remark}
\newtheorem{example}{Example}
\title{Toward a unified theory for common affine roots of general sets of multivariate polynomials}
\author{Olav Geil\footnote{olav@math.aau.dk}
}
\affil{Department of Mathematical Sciences\\ Aalborg University}
\begin{document}
\maketitle

\begin{abstract}
For univariate polynomials over arbitrary field the degree gives an upper bound on the number of roots (factor theorem) and as a related result for any finite point-set one can construct a polynomial of degree equal to the cardinality having all the points as roots (interpolation theorem). Tao noted in~\cite{tao2014algebraic} that the theory of multivariate polynomials is not yet sufficiently matured to provide similar theorems with an equally simple relation between them. In the present paper we argue that for general multivariate polynomials the right measure for the size of the polynomial should not be the degree, but the leading monomial. In this setting the footprint bound~\cite{geilhoeholdt} becomes a natural enhancement of the factor theorem providing a bound on the number of common roots of general multivariate polynomials which is sharp for all finite Cartesian product point-sets. As our main contribution, by using methods from the theory of error-correcting codes we establish a natural formulation of the interpolation theorem to the case of common roots of multivariate polynomials. In short the two theorems reduce to the same result, but for dual spaces, establishing the unification requested in~\cite{tao2014algebraic}. We leave it for further research to possibly establish similar interpolation results taking one or more of the various concepts of multiplicity of multivariate polynomials into account.\\

\noindent {\bf{Keywords:}} {\it{Affine roots, footprint bound, generalized Alon-Füredi bound, interpolation, multivariate polynomials, polynomial method}}\\

\noindent {\bf{MSC}}: 05E40; 12E05
\end{abstract}

\section{Introduction}\label{secintro}
For a univariate non-zero polynomial over an arbitrary field the number of roots is at most equal to its degree, and if the field is algebraically closed and if roots are counted with multiplicity then equality holds. This result is in close correspondence with the fact that the interpolation problem of finding a polynomial of lowest possible degree evaluating to zero on a finite point-set boils down to calculating the product of linear terms each having one of the desired elements as root, and by repeating terms one can also deal with multiplicity. The situation is very much in contrast to that of multivariate polynomials. Firstly, a multivariate polynomial ring is not a principal ideal domain, and therefore the interpolation problem can not  in general be about multiplying linear factors. Secondly, a non-constant multivariate polynomial often has infinitely many roots and always does if the field is algebraically closed, and therefore if the aim is to consider a case of  finitely many roots then one either needs to restrict to roots from a prescribed finite point-set or one must study mutual roots of a set of polynomials which together define a zero-dimensional ideal. 

The purpose of the present paper is to establish a natural enhancement of the results from univariate polynomials into the case of general multivariate polynomials and by doing so to establish a connection between the following two problems:
\begin{itemize}
\item[(I)] How many common roots can a finite set of polynomials have given some  measure on their sizes?
\item[(II)] Given some measure on the size of polynomials consider the class of all those of size within some given interval. What is the largest number  $a$ such that for any point-set $A$ of cardinality equal to $a$ there exists $k$ linearly independent polynomials within the class evaluating simultaneously to zero on $A$, where $k$ is some adequate integer? 
\end{itemize} 

One often used measure on the size of a polynomial is its total degree in which case one may for instance in (I) and (II) consider the class of all polynomials of a given prescribed total degree. A much more fine grained measure, however, would be that of the leading monomial according to some fixed monomial ordering. The contribution of the present paper is to demonstrate that by applying the latter mentioned measure one is able to obtain for multivariate polynomials answers to (l) and (II) which are as strongly related to each other as are the answers in the univariate case. Recall that for univariate polynomials there is only one choice of monomial ordering and the leading monomial of a polynomial $F(X)$ of degree $d\geq 0$ equals $X^d$, hence the two concepts are basically the same. In contrast, for multivariate polynomials there are infinitely many monomial orderings and for many of these the total degree of the leading monomial does not even need to be equal to the total degree of the polynomial under consideration. Considering leading monomial(s) is not only as natural a choice for our study as considering total degree(s); as shall be demonstrated it is the key to achieve the above mentioned enhancement and connection. Even more importantly, using the leading monomial rather than the total degree provides much more fine-grained information which is  sharp in several cases.

In his exposition~\cite{tao2014algebraic} Tao treats problems (I) and (II) from the above paragraph applying the total degree as the measure of the size of polynomials and mostly concentrating on single polynomials rather than arbitrary sets of polynomials. He discusses on the one side Alon's famous combinatorial Nullstellensatz~\cite{alon1999c,kos2012a}, the Alon-Füredi bound~\cite{alon1993covering}, the Schwartz-Zippel bound~\cite{1979probabilistic,schwartz1980fast} (as well as Dvir's generalization taking multiplicity into account~\cite{dvir2009s,dvir2020i,dvir2013extensions}) and on the other side the below standard result~\cite{tao2014algebraic}[Lem.\ 1] on interpolation; with the aim of solving problems in extremal combinatorics: 
\begin{proposition}~\label{lem1}
Let ${\mathbb{F}}$ be a field, let $m \geq 1$ be an integer, and $d \geq 0$. If $A \subseteq {\mathbb{F}}^m$ has cardinality less than ${d+m}\choose{m}$, then there exists a non-zero polynomial $F \in {\mathbb{F}}[X_1, \ldots , X_n]$ of degree at most $d$ having $A$ as roots.
\end{proposition}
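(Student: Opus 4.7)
The plan is to prove this via a dimension count combined with the rank-nullity theorem applied to a suitable evaluation map. The key observation is that the set $V_d$ of polynomials in $\mathbb{F}[X_1,\ldots,X_m]$ of total degree at most $d$ is a finite-dimensional $\mathbb{F}$-vector space, and its dimension is exactly $\binom{d+m}{m}$. So the strategy will be to show that the evaluation map from $V_d$ to $\mathbb{F}^{|A|}$ cannot be injective when $|A| < \binom{d+m}{m}$, and then extract a nonzero element of its kernel.

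First I would verify that $\dim_{\mathbb{F}} V_d = \binom{d+m}{m}$. An $\mathbb{F}$-basis of $V_d$ consists of all monomials $X_1^{a_1}\cdots X_m^{a_m}$ with $a_1+\cdots+a_m\leq d$, and the number of such monomials can be computed by a standard stars-and-bars argument (introducing a slack variable $a_0 \geq 0$ with $a_0+a_1+\cdots+a_m = d$ gives exactly $\binom{d+m}{m}$ nonnegative integer solutions).

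Next I would define the evaluation map
\[
\mathrm{ev}_A : V_d \longrightarrow \mathbb{F}^{|A|}, \qquad F \longmapsto \bigl(F(P)\bigr)_{P\in A}.
\]
This map is $\mathbb{F}$-linear. Its codomain has dimension $|A|$, which by hypothesis is strictly less than $\binom{d+m}{m} = \dim V_d$. Hence $\mathrm{ev}_A$ cannot be injective, and by rank-nullity its kernel contains a nonzero element $F$. By construction $F$ has degree at most $d$ and vanishes on every point of $A$, which is the desired polynomial.

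There is essentially no hard step here: the argument is pure linear algebra once the dimension of $V_d$ is identified. The only part requiring a small calculation is the binomial count of monomials, and the only conceptual ingredient is recognizing that vanishing on a finite set is a linear condition on coefficients, so the problem reduces to comparing dimensions of a source and target space. No appeal to algebraic closure, multiplicities, or to any deeper structure of $\mathbb{F}$ is needed, which is consistent with the fact that the statement holds over an arbitrary field.
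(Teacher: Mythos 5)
Your proof is correct and follows essentially the same dimension-count/rank-nullity argument that the paper attributes to Tao and then reuses (with the evaluation map on a monomial span) in the $k=1$ case of the proof of Theorem~\ref{theint}. Nothing further is needed.
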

 At pages 27-28 Tao writes: \\
 
 \hspace{0.4cm} \begin{minipage}{13cm} 
 {\textit{Unfortunately, the two methods cannot currently be easily combined, because the polynomials produced by interpolation methods are not explicit enough that individual coefficients can be easily computed, but it is conceivable that some useful unification of the two methods could appear in the future.}} \end{minipage}  \\
 
The theory of commutative algebra and algebraic geometry is very rich and indeed Tao in addition to the above results treats a cornucopia of deep results such as Bezout's Theorem, Stepanov's Method, the Hasse Bound and many more~\cite{baker1968linear,bombieri2006counting,hasse1936theorie,thas1995projective,lang1954number,stepanov1999counting,weil1949numbers}, as well as provide interpolation counterparts in more cases.

 Our contribution is to address the problem of common roots of general polynomials where the only assumption is that no two of them have the same leading monomial which is no restriction as we can always apply the division algorithm~\cite{clo}[Sec,\ 2, Par.\ 3]] without effecting the set of common roots. Similarly, we consider any finite point-sets without any restriction on their structure. Therefore, our answer to (II) should be compared to Proposition~\ref{lem1} and our answer to (I) should be compared to the Schwartz-Zippel bound, the Alon-Füredi bound and the generalized Alon-Füredi bound~\cite{bishnoi2018zeros}. In both cases the material of the present paper can be viewed as improvements and of course also as enhancements as we in contrast to the mentioned bounds do not restrict to single polynomials but rather consider arbitrary finite sets of linearly independent polynomials. We should mention that the problems (I) and (II) have also been thoroughly treated in~\cite{walsh2020polynomial}, but their answers are of a completely different nature than ours. In contrast to them we on the one hand do not assume any structure on the point-set, but on the other hand rely on information regarding leading monomial rather than just degree.
 
In the present paper we give refined and enhanced answers to both (I) and (II) using the leading monomials of a set of polynomials as a measure on their sizes. Knowing the leading monomial corresponds to having important information regarding the most crucial coefficient and thereby our insight addresses the remark by Tao. More importantly, we demonstrate a very strong relationship between our answers to (I) and (II) which is then our proposal for the requested useful unification, except we do not treat multiplicity. 

Our answer to (I) is basically to recall some instances of the footprint bound~\cite{clo2,onorin,geilhoeholdt}, but we choose to present in this paper as an answer to (I) also an enhanced version of the Feng-Rao bound for primary codes~\cite{AG,GeilEvaluationCodes,agismm} demonstrating that this bound works over arbitrary fields rather than only over finite fields. The latter framework is more general as our footprint bound supported answer to (I) can be derived as an easy corollary to the Feng-Rao bound for primary codes. More importantly, the natural framework for us to prove our answer to (II) is to combine some basic linear algebra results stated by Forney in~\cite{forney94} with an enhanced version of the Feng-Rao bound for dual codes~\cite{FR24,FR1,FR2,handbook,geithom} (although we could have also chosen to replace the latter with an enhanced version of the order bound~\cite{handbook} from order domain theory which would represent an intermediate level between the polynomial ring description and the pure linear algebra description). As shall become clear from our analysis, in a certain sense problems (I) and (II) are basically the same, but stated for dual spaces. This duality is also what allows us to prove that the answer to (II) is sharp in the finite field case. We stress that our description at linear code level should not be considered as a detour for us to treat the case of general sets of multivariate polynomials. In our view the description at linear code level is the most fundamental description with the potential of supporting future investigations beyond the level of general sets of multivariate polynomials and general affine point-sets.

The paper is organized as follows. We start in Section~\ref{secmain} by presenting in Theorem~\ref{theint} our main finding regarding (II) and by recalling various versions of the footprint bound which address (I). In that section we also in Remark~\ref{remconnect} provide a description of the relationship between these results. To prepare for the proof of the first mentioned result we then in Section~\ref{secclean} treat related problems in the language of error-correcting codes over arbitrary fields. In Section~\ref{seccorollary1} as a corollary we then obtain the proof of our answer to (II) 
which also demonstrates the strong relationship between (I) and (II), and which implies that our answer to (II) is sharp in the case of finite fields. Section~\ref{secconclusion} contains concluding remarks.

\section{Main results on multivariate polynomials}\label{secmain}

Our main finding is the following enhancement of Proposition~\ref{lem1} which surprisingly is the best possible general result as for finite fields we can show it to be sharp. Observe that if ${\mathbb{F}}$ is a finite field with $q$ elements, in which case we write ${\mathbb{F}}={\mathbb{F}}_q$, and if $q \leq d$ then Proposition~\ref{lem1} does not give us any real information as all points in ${\mathbb{F}}^n$ are then trivially roots of each of the polynomials $X_1^q-X_1, \ldots , X_m^q-X_m$. The below theorem is prepared to also give non-trivial insight in such cases.
\begin{definition}
For multivariate monomials $N_1, \ldots , N_s$ write
\begin{eqnarray}
\mu(N_1, \ldots , N_s)&:=&\# \{ M \mid M {\mbox{ is a monomial which divides some }} N_i, i \in \{1, \ldots , s\} \}. \nonumber
\end{eqnarray}
\end{definition}

\begin{definition}\label{defthree}
In the case of polynomials in $m$ variables over a finite field ${\mathbb{F}}_q$ define
$$\Box:=\{X_1^{i_1} \cdots X_m^{i_m} \mid 0 \leq i_1, \ldots , i_m \leq q-1\}.$$
\end{definition}
\begin{remark}
Note, that if we write ${\mathbb{F}}_q^m=\{P_1, \ldots , P_{n=q^m} \}$ then 
$$\{ (M(P_1), \ldots , M(P_n)) \mid M \in \Box \}$$
constitutes a basis for ${\mathbb{F}}_q^{n}$ as a vector-space over ${\mathbb{F}}_q$. In particular for any polynomial $G$ with support not being a subset of $\Box$ there exists a polynomial $H$ with support being a subset of $\Box$ in such a way that $G$ and $H$ evaluates similarly at ${\mathbb{F}}_q^m$ and such that if $H$ is not the zero-polynomial then ${\mbox{lm}}(H) \preceq {\mbox{lm}}(G)$ holds for any choice of monomial ordering $\prec$. Here, ${\mbox{lm}}()$ denotes the leading monomial. More concretely, $H$ can be obtained from $G$ by dividing the latter with $\{ X_1^q-X_1, \ldots , X_m^q-X_m\}$. Therefore it is no restricting to consider in the finite field case only polynomials with support in $\Box$.
\end{remark}
Given a polynomial $F$ in the following by ${\mbox{Supp}}(F)$ we shall denote the support of it, i.e. ${\mbox{Supp}}(F)$ is the set of monomials that appear with non-zero exponents in the description of $F$. We note that later in the paper we shall also use the notion ${\mbox{Supp}}()$ in another meaning, when we apply it to vectors and vector-spaces.
\begin{theorem}\label{theint}
Given a monomial ordering $\prec$ on the set of monomials in $m$ variables let $M_1 \prec \cdots \prec M_t$ be strictly consecutive monomials, i.e.\ for $i=1, \ldots , t-1$ there does not exist any monomial $L$ with $M_i \prec L \prec M_{i+1}$. In the special case ${\mathbb{F}}={\mathbb{F}}_q$ we shall assume without loss of generality that $\{M_1, \ldots , M_t\} \subseteq \Box$ and allow for $M_i \prec L \prec M_{i+1}$ as long as $L \notin \Box$. 
Consider $1 \leq k \leq t$ then for any $A \subseteq {\mathbb{F}}^m$ with 
\begin{equation}
\# A  < \min \{ \mu (M_{i_1}, \ldots , M_{i_{t-k+1}}) \mid 1 \leq i_1 < \cdots < i_{t-k+1} \leq t\}\label{eqsnabeleinz}
\end{equation}
there exists $k$ polynomials $F_1, \ldots , F_k$ with pairwise different leading monomials all belonging to $\{M_1, \ldots , M_t\}$ in such a way that every element of $A$ is a common root. If ${\mathbb{F}}$ is a finite field then the result is sharp meaning that for some $A$ of cardinality one more such polynomials do not exist. Moreover for finite fields one may assume ${\mbox{Supp}}(F_i) \subseteq \Box$ for $i=1, \ldots , k$. 
\end{theorem}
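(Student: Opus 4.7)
The plan is to exploit the structure of the footprint of the vanishing ideal $I(A) \subseteq {\mathbb{F}}[X_1, \ldots, X_m]$ and derive everything from two basic facts. Let $\Delta := \Delta_\prec(I(A))$ denote the footprint, i.e., the set of monomials not divisible by the leading monomial of any element of $I(A)$. Then $\Delta$ is closed under division (if $N \in \Delta$ and $N' \mid N$, then $N' \in \Delta$), and the footprint bound for the ideal of a finite point set gives $|\Delta| = \#A$, since the residue classes of monomials in $\Delta$ form a basis of ${\mathbb{F}}[X_1, \ldots, X_m]/I(A)$.

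First I would translate the conclusion. Set $\Lambda := \{M_1, \ldots, M_t\}$. For each $M_i \in \Lambda \setminus \Delta$ there exists $F_i \in I(A)$ with leading monomial exactly $M_i$; conversely, any such $F_i$ forces $M_i \notin \Delta$. Since leading monomials are automatically pairwise distinct as soon as one picks one $F_i$ per index in $\Lambda \setminus \Delta$, existence of the required $k$ polynomials reduces to the inequality $|\Lambda \setminus \Delta| \geq k$, equivalently $|\Lambda \cap \Delta| \leq t-k$.

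The heart of the argument is then a short counting step by contradiction. Assume $|\Lambda \cap \Delta| \geq t-k+1$ and pick any indices $1 \leq i_1 < \cdots < i_{t-k+1} \leq t$ with $M_{i_1}, \ldots, M_{i_{t-k+1}} \in \Delta$. Divisibility-closure of $\Delta$ forces every monomial dividing some $M_{i_j}$ to belong to $\Delta$, whence
\begin{equation*}
\#A \;=\; |\Delta| \;\geq\; \mu(M_{i_1}, \ldots, M_{i_{t-k+1}}) \;\geq\; \min\{\mu(M_{j_1}, \ldots, M_{j_{t-k+1}}) : 1 \leq j_1 < \cdots < j_{t-k+1} \leq t\},
\end{equation*}
contradicting hypothesis~(\ref{eqsnabeleinz}). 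For the support clause in the finite-field case, I would reduce each $F_i$ modulo the relations $X_1^q - X_1, \ldots, X_m^q - X_m \in I(A)$; since ${\mbox{lm}}(F_i) = M_i \in \Box$, the reduction leaves the leading monomial unchanged while collapsing the support into $\Box$.

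For sharpness, let $(i_1, \ldots, i_{t-k+1})$ attain the minimum in~(\ref{eqsnabeleinz}) and let $D$ be the downward (division) closure of $\{M_{i_1}, \ldots, M_{i_{t-k+1}}\}$, a subset of $\Box$ of cardinality $\min \mu$. I would then exhibit $A \subseteq {\mathbb{F}}_q^m$ of size $|D|$ whose footprint equals $D$; for this $A$ one has $\Lambda \cap \Delta \supseteq \{M_{i_1}, \ldots, M_{i_{t-k+1}}\}$, so $|\Lambda \setminus \Delta| \leq k-1$ and no $k$ polynomials of the required form can exist. The main obstacle is precisely this construction: realizing a prescribed downward closed subset of $\Box$ as the footprint of $I(A)$ over ${\mathbb{F}}_q$ demands a careful choice of point set, and it is presumably here that the coding-theoretic toolkit (Forney's lemmas and the enhanced Feng-Rao bound for dual codes) announced in Section~\ref{secclean} enters, providing the construction uniformly and simultaneously revealing the dual nature of the interpolation answer~(II) and the footprint answer~(I).
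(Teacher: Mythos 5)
Your existence argument is correct and is a genuinely different, and simpler, route than the one taken in the paper. The paper proves the $k\geq 2$ case via the coding-theoretic machinery of Sections~\ref{secclean}--\ref{seccorollary1} (Forney's duality lemmas and the dual Feng--Rao bound), whereas you derive the whole existence part directly from two elementary facts about $\Delta:=\Delta_\prec(I(A))$: that it is closed under division and that $\#\Delta=\#A$ since $\{M+I(A)\mid M\in\Delta\}$ is a basis of ${\mathbb{F}}[X_1,\ldots ,X_m]/I(A)\cong{\mathbb{F}}^{\#A}$. From these you correctly observe that $M_i$ is the leading monomial of some element of $I(A)$ exactly when $M_i\notin\Delta$, and that $\#(\{M_1,\ldots ,M_t\}\cap\Delta)\geq t-k+1$ would force $\mu(M_{i_1},\ldots ,M_{i_{t-k+1}})\leq\#\Delta=\#A$ for the chosen indices, contradicting~(\ref{eqsnabeleinz}). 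This argument also never uses that the $M_i$ are consecutive, so it in fact proves the existence part for an arbitrary set of $t$ monomials (contained in $\Box$ when ${\mathbb{F}}={\mathbb{F}}_q$), a mild strengthening of the stated result. Your handling of the support clause by reducing modulo $\{X_1^q-X_1,\ldots ,X_m^q-X_m\}$ is also sound, because every term of $F_i$ is $\preceq M_i\in\Box$ and reduction only produces smaller terms, leaving the coefficient of $M_i$ untouched. What each approach buys: yours is short, self-contained, and transparently explains why the quantity $\mu$ appears; the paper's route is heavier but is precisely what exposes the duality with Corollary~\ref{corsamesame} and delivers sharpness for free via Theorem~\ref{thefull}.

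On sharpness, your reduction is correct but incomplete, as you yourself flag. You need $A\subseteq{\mathbb{F}}_q^m$ with $\#A=\min\mu$ whose footprint is exactly $D$, the divisor closure of a minimizing $\{M_{i_1},\ldots ,M_{i_{t-k+1}}\}$. This is in fact achievable, but not because every downward-closed subset of $\Box$ is realisable as a footprint (that is false in general); it works here because of the special structure of $D$. Concretely, writing $N=X_1^{q-1}\cdots X_m^{q-1}$ and, for a monomial $M\in\Box$, letting $B_M\subseteq{\mathbb{F}}_q^m$ be the Cartesian set of points where the polynomial $H_M$ from the proof of Corollary~\ref{corsamesame} is nonzero, one can take $A=\bigcup_{j=1}^{t-k+1}B_{N/M_{i_j}}$. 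Each $B_{N/M_{i_j}}$ is a Cartesian product whose vanishing ideal has footprint equal to the set of divisors of $M_{i_j}$; since footprints only grow under intersection of ideals, $\Delta_\prec(I(A))\supseteq\bigcup_j\{L\mid L\text{ divides }M_{i_j}\}=D$, and under the natural bijection $\Box\ni X_1^{q-1-e_1}\cdots X_m^{q-1-e_m}\leftrightarrow B_{\text{(that monomial)}}$-point, the set $A$ itself has exactly $\#D=\min\mu$ elements, forcing equality $\Delta_\prec(I(A))=D$. That closes your gap in a way that stays entirely in the footprint language; the paper instead reaches essentially the same $A$ as the support of a minimizing subcode of $C_2^\perp$ modulo $C_1^\perp$, using Forney's lemmas and the relative generalized Hamming weight picture.
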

\begin{proof}
For $k=1$ the proof of Proposition~\ref{lem1} as presented in~\cite{tao2014algebraic} trivially applies in the following way. Consider the linear map 
$${\mbox{ev}}: {\mbox{Span}}_ {\mathbb{F}} \{ M \mid M {\mbox{ divides some }} M_i, i \in \{ 1, \ldots , t\} \} \rightarrow  {\mathbb{F}}^{\#A}$$ 
given by ${\mbox{ev}}(F)=(F(P_1), \ldots , F(P_{\# A\ }) )$ where $A=\{P_1, \ldots , P_{\# A}\}$. As the preimage is larger than the image there must exist a non-zero $F$ in the preimage evaluating to $\vec{0}$. Now ${\mbox{lm}}(F)$ divides some $M_i$ and we write $K=\dfrac{M_i}{{\mbox{lm}(F)}}$. But then ${\mbox{lm}}(K F)=M_i$ and $KF$ of course evaluates to $\vec{0}$. 
The proof for larger values of $k$ can be found in Section~\ref{seccorollary1}, where also the sharpness in case of a finite field is demonstrated.
\end{proof}

\begin{remark}
Theorem~\ref{theint} reduces to the well-known result for univariate polynomials by choosing $\{M_{t=1}=X^d\}$ where $d$ is the degree under consideration. 
\end{remark}
\begin{remark}
Observe, that in the case of an infinite field or in the case of a finite field ${\mathbb{F}}_q$ with $d< q$ Proposition~\ref{lem1} appears as a corollary to Theorem~\ref{theint}, by choosing a graded monomial ordering and by letting $\{M_1, \ldots , M_t\}$ be all monomials of total degree $d$. For Proposition~\ref{lem1} to cover in the case of a finite field ${\mathbb{F}}_q$ general meaningful $d$, i.e.  $1 \leq d \leq m (q-1)$, we request that the support of $F$ is a subset of $\Box$  and we replace $\# A < {{d+m}\choose{m}}$ with 
\begin{equation}
\# A < \# \{M \in \Box \mid \deg (M) \leq d \}. \label{eqhoejre}
\end{equation} 
We note that the right hand side of (\ref{eqhoejre}) equals the dimension of the q-ary Reed-Muller code ${\mathcal{RM}}_q(d,m)$~\cite[Lem.\ 2.4.6]{delsarte1970generalize}).
\end{remark}

We next address (I) by recalling the footprint bound on the number of common roots of any given set of polynomials over arbitrary field~\cite[Ch.\ 2, Thm.\ 2.10]{clo2} and \cite{onorin,geilhoeholdt}. To the best of our knowledge this is the most general and sharp bound for arbitrary sets of polynomials over arbitrary point-sets.
Let $\prec$ be a monomial ordering on the set of monomials in $m$ variables, and consider an ideal $I \subseteq {\mathbb{F}}[X_1, \ldots , X_m]$. The footprint of $I$ with respect to $\prec$ is
\begin{eqnarray}
 && {\mbox{\hspace{-0.5cm}}} \Delta_\prec (I):= \nonumber \\
 && \{M {\mbox{ is a monomial }} \mid M {\mbox{ is not divisible by the leading monomial of any }} F \in I \}. \nonumber
 \end{eqnarray}
The crucial observation is that $\{ M+I \mid M \in \Delta_\prec (I) \}$ constitutes a basis for ${\mathbb{F}}[X_1, \ldots , X_m]/I$ as a vector-space over ${\mathbb{F}}$, \cite[Ch.\ 5, Sec.\ 3, Prop.\ 4]{clo}, which is seen by applying the division algorithm.
\begin{theorem}\label{thefoot}
If $\Delta_\prec (I)$ is finite then the size of the variety $V_{\mathbb{F}}(I)$ is at most equal to $\# \Delta_\prec (I)$ with equality if $I$ is a radical ideal and ${\mathbb{F}}$ is algebraically closed. 
\end{theorem}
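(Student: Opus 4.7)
The plan is to establish the bound via a standard evaluation-map argument, using as the central input the observation already highlighted in the paper that the residue classes of the monomials in $\Delta_\prec(I)$ form an $\mathbb{F}$-basis of $\mathbb{F}[X_1,\ldots,X_m]/I$. Hence finiteness of $\Delta_\prec(I)$ is equivalent to $\mathbb{F}[X_1,\ldots,X_m]/I$ being a finite-dimensional $\mathbb{F}$-vector space of dimension $\#\Delta_\prec(I)$, and the variety $V_\mathbb{F}(I)$ is automatically finite.

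First I would enumerate $V_\mathbb{F}(I) = \{P_1,\ldots,P_n\}$ and introduce the evaluation map
\[
\mathrm{ev}\colon \mathbb{F}[X_1,\ldots,X_m]/I \longrightarrow \mathbb{F}^n, \qquad F+I \longmapsto (F(P_1),\ldots,F(P_n)).
\]
This is well-defined because every $F\in I$ vanishes on $V_\mathbb{F}(I)$, and it is $\mathbb{F}$-linear. The main point is that $\mathrm{ev}$ is \emph{surjective}: for any pair of distinct points $P_i\neq P_j$ there is a coordinate in which they differ, yielding a linear polynomial vanishing at $P_j$ but not at $P_i$; taking the product over all $j\neq i$ and normalizing gives a Lagrange-type interpolator $L_i$ with $L_i(P_j)=\delta_{ij}$. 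Surjectivity of $\mathrm{ev}$ then yields
\[
n = \#V_\mathbb{F}(I) \;\leq\; \dim_\mathbb{F} \mathbb{F}[X_1,\ldots,X_m]/I \;=\; \#\Delta_\prec(I),
\]
which is the desired bound.

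For the equality statement I would invoke Hilbert's Nullstellensatz: when $\mathbb{F}$ is algebraically closed one has $I(V_\mathbb{F}(I)) = \sqrt{I}$, and by the radical assumption $\sqrt{I}=I$. Therefore $F(P_i)=0$ for all $i$ forces $F\in I$, so $\ker(\mathrm{ev}) = 0$, i.e.\ $\mathrm{ev}$ is also injective and hence an isomorphism, giving equality $\#V_\mathbb{F}(I) = \#\Delta_\prec(I)$.

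The only subtle point, and the place where I would take most care, is the construction of the Lagrange interpolators over a possibly infinite (and possibly not algebraically closed) ground field $\mathbb{F}$: one needs that for any two distinct points of $\mathbb{F}^m$ some coordinate-wise affine polynomial with $\mathbb{F}$-coefficients separates them, which is elementary. Everything else is bookkeeping with the basis property of $\Delta_\prec(I)$ recalled just before the theorem. Note that this argument does not use finiteness of $\mathbb{F}$ anywhere, so the result holds over arbitrary fields, as claimed.
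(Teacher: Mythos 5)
Your argument follows essentially the same route as the paper's: the evaluation map $\mathrm{ev}\colon \mathbb{F}[X_1,\ldots,X_m]/I \to \mathbb{F}^n$, surjectivity via interpolation (your explicit Lagrange construction is what the paper summarizes as ``Hermite interpolation''), dimension counting for the bound, and $\mathcal{I}(V_{\mathbb{F}}(I))=I$ via the Nullstellensatz for the equality. One point, though, is left unjustified. You write that ``the variety $V_\mathbb{F}(I)$ is automatically finite'' once $\Delta_\prec(I)$ is finite, and then enumerate $V_{\mathbb{F}}(I)=\{P_1,\ldots,P_n\}$ --- but this presupposes the very finiteness you have just asserted without proof. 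The claim is true, yet the way you have structured the argument makes it a prerequisite rather than a consequence, and the ``subtle point'' you flag at the end (separating points over an arbitrary field) is not actually the delicate one.

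The paper handles this cleanly: it first derives the bound under the working assumption that $V_{\mathbb{F}}(I)$ is finite, and then disposes of the infinite case by contradiction --- if $V_{\mathbb{F}}(I)$ were infinite one could pick a finite subset $\{\beta_1,\ldots,\beta_n\}$ with $n > \#\Delta_\prec(I)$, and the very same interpolation argument shows $\mathrm{ev}$ is surjective onto $\mathbb{F}^n$, impossible from a space of dimension $\#\Delta_\prec(I)$. Your proof is repaired by the same small rearrangement: apply the surjectivity step to an arbitrary finite subset of $V_{\mathbb{F}}(I)$ rather than to all of it at once, so that finiteness of the variety becomes an output of the argument instead of an input. (Alternatively, you could justify the ``automatic'' claim directly: finite-dimensionality of the quotient forces each $X_i$ to satisfy a nonzero univariate polynomial modulo $I$, bounding the possible coordinate values.) With either fix in place, the two proofs coincide.
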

\begin{proof}
Assume first that $V_{\mathbb{F}}(I)$ is finite and write $V_{\mathbb{F}}(I)=\{ \beta_1, \ldots , \beta_n\}$.  Consider the evaluation map ${\mbox{ev}}: {\mathbb{F}}[X_1, \ldots , X_m]/I \rightarrow {\mathbb{F}}^n$ given by ${\mbox{ev}}(F+I)=(F(\beta_1), \ldots , F(\beta_n))$. By interpolation arguments this map is surjective and the first part of the theorem follows. To see the last part we use the fact~\cite{clo}[Ch.\ 4, Sec.\ 2, Thm.\ 7] that ${\mathcal{I}}(V_{\mathbb{F}}(I))=I$ whenever $I$ is radical and ${\mathbb{F}}$ is algebraically closed. Aiming for a contradiction we finally assume that $V_{\mathbb{F}}(I)$ is infinite and choose $\{ \beta_1, \ldots , \beta_n\} \subseteq V_{\mathbb{F}}(I)$ with $n > \# \Delta_\prec (I)$. But then ${\mbox{ev}}$ cannot be surjective.
\end{proof}
As a corollary we obtain the following result where the assumption on $F_1, \ldots , F_s$ can always be ensured due to the division algorithm.
\begin{corollary}~\label{corfoot}
Consider $I$ such that $\Delta_\prec(I)$ is finite. Let $F_1, \ldots , F_s \in {\mathbb{F}}[X_1, \ldots , X_m]$ with $F_1, \ldots , F_s \in {\mbox{Span}}_{\mathbb{F}} \Delta_\prec (I)$ satisfying that ${\mbox{lm}}(F_1)=M_1, \ldots , {\mbox{lm}}(F_s)=M_s$ are pairwise different. Then the number of common roots from $V_{\mathbb{F}}(I)$ is at most 
\begin{eqnarray}
&&\# \{ M \in \Delta_\prec (I) \mid M {\mbox{ is not divisible by any }} M_i, i=1, \ldots , s\}\nonumber \\
&=&\# \Delta_\prec (I) - \sigma (M_1, \ldots , M_s) \nonumber 
\end{eqnarray}
where $\sigma (M_1, \ldots , M_s):= \# \{ M \in \Delta_\prec (I) \mid M {\mbox{ is divisible by some }} M_i, i\in \{1, \ldots , s \} \}$. 
\end{corollary}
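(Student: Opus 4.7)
The plan is to reduce the corollary directly to Theorem~\ref{thefoot} applied to the enlarged ideal $J := I + \langle F_1, \ldots , F_s \rangle$. First I would observe that the common roots of $F_1, \ldots , F_s$ lying in $V_{\mathbb{F}}(I)$ are by construction exactly the points of $V_{\mathbb{F}}(J)$, so it suffices to bound $\# V_{\mathbb{F}}(J)$ via the footprint of $J$.

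Next, I would compare the two footprints. Because $I \subseteq J$, every leading monomial of an element of $I$ is also a leading monomial of an element of $J$, and hence $\Delta_\prec(J) \subseteq \Delta_\prec(I)$; in particular $\Delta_\prec(J)$ is automatically finite. Moreover each $F_i$ lies in $J$ and has leading monomial $M_i$, so no element of $\Delta_\prec(J)$ can be divisible by any of the $M_i$. Combining these two observations yields the inclusion
$$\Delta_\prec(J) \subseteq \{ M \in \Delta_\prec (I) \mid M \text{ is not divisible by any } M_i,\ i=1,\ldots ,s \}.$$
Applying Theorem~\ref{thefoot} to $J$ then gives the announced upper bound on $\# V_{\mathbb{F}}(J)$.

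The remaining equality in the statement is purely combinatorial: inside the finite set $\Delta_\prec(I)$, the monomials divisible by at least one $M_i$ are by definition the $\sigma(M_1,\ldots,M_s)$ many elements counted by $\sigma$, and its complement in $\Delta_\prec(I)$ is precisely the set on the first line. Subtraction gives the second expression.

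I do not foresee a real obstacle here; the argument is essentially a one-line reduction to the footprint bound. The only points worth a quick sanity check are that the finiteness hypothesis transfers from $I$ to $J$ (immediate from $\Delta_\prec(J) \subseteq \Delta_\prec(I)$) and that the hypothesis $F_i \in \mathrm{Span}_{\mathbb{F}} \Delta_\prec(I)$ is only used implicitly to guarantee that $M_i = \mathrm{lm}(F_i) \in \Delta_\prec(I)$, so that the right-hand side of the bound is the "right" quantity and not accidentally vacuous; as the remark preceding the statement notes, this normalization can always be achieved by the division algorithm and therefore costs nothing.
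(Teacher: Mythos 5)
Your reduction to the footprint bound via the enlarged ideal $J = I + \langle F_1,\ldots,F_s\rangle$ is correct and is precisely the argument the paper intends when it states the result ``as a corollary'' to Theorem~\ref{thefoot} (the paper itself omits the explicit proof). Your observations that $\Delta_\prec(J)\subseteq\Delta_\prec(I)$ (giving finiteness for free), that each $M_i=\mathrm{lm}(F_i)$ with $F_i\in J$ excludes all its multiples from $\Delta_\prec(J)$, and that the hypothesis $F_i\in\mathrm{Span}_{\mathbb{F}}\Delta_\prec(I)$ merely guarantees $M_i\in\Delta_\prec(I)$ so that the bound is non-vacuous (and, as the paper notes, can always be arranged by the division algorithm) are all exactly right.
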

As a special case of Corollary~\ref{corfoot} we may consider the situation where the affine variety equals a finite Cartesian product point-set $A_1 \times \cdots \times A_m$. As shall become clear in Remark~\ref{remconnect} below a study of this case surprisingly holds the key to establish the connection between our answers to (I) and (II). For finite Cartesian product point-sets the $I$ in Corollary~\ref{corfoot} becomes
\begin{eqnarray}
I= \left\langle G_1=\prod_{b \in A_1}(X_1-b), \ldots , G_m=\prod_{b \in A_m}(X_m-b)\right\rangle \label{eqsnabeldelta}
\end{eqnarray}
from which we easily read of the footprint which is independent on the actual choice of monomial ordering $\prec$
\begin{equation}
\Delta_\prec (I) =\{ M {\mbox{ is a monomial }} \mid \deg_{X_1} M < \# A_1, \ldots , \deg_{X_m} M < \# A_m\} \label{eqsnabeldiamond}.
\end{equation}
Note, that if $A_1=\cdots =A_m={\mathbb{F}}_q$, then this set equals $\Box$ where the latter is described in Definition~\ref{defthree}.

\begin{corollary}\label{corsamesame}
Let $A_1 \times \cdots \times A_m$ be a finite point-set and consider an arbitrary monomial ordering $\prec$. Let $I$ be as in (\ref{eqsnabeldelta}) which implies that $\Delta_\prec (I)$ equals (\ref{eqsnabeldiamond}). Consider pairwise different monomials $M_1, \ldots , M_t \in \Delta_\prec (I)$. For general set of $k \leq t$  polynomials  $F_1, \ldots , F_k \in {\mathbb{F}}[X_1, \ldots , X_m]$ with ${\mbox{Supp}}(F_i) \subseteq \Delta_\prec (I)$, $i=1, \ldots , k$ and leading monomials being pairwise different all belonging to $\{M_1, \ldots , M_t\}$ the number of non-common roots of $F_1, \ldots , F_k$ in $A_1 \times \cdots \times A_m$ is at least 
\begin{eqnarray}
&&\min \{ \sigma (M_{i_1}, \ldots , M_{i_k}) \mid 1 \leq i_1 < \cdots <i_k \leq t \} \nonumber \\
&=&\min \{ \mu \left( \dfrac{K}{M_{i_1}}, \ldots , \dfrac{K}{M_{i_k}} \right) \mid  1 \leq i_1 < \cdots <i_k \leq t \} \label{eqother}
\end{eqnarray}
where $K:=X_1^{\# A_1-1} \cdots X_m^{\#A_m -1}$. This bound is sharp.
\end{corollary}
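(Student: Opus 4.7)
\emph{Plan.} The strategy is to apply Corollary~\ref{corfoot} directly to the vanishing ideal $I=\langle G_1,\ldots,G_m\rangle$, with $G_i=\prod_{b\in A_i}(X_i-b)$. As already noted in the paragraph preceding the statement, $\Delta_\prec(I)=\Diamond$ for every monomial ordering $\prec$, and the variety $V_{\mathbb{F}}(I)=A_1\times\cdots\times A_m$ has cardinality $\#\Diamond$. For any $k$-subset $\{M_{i_1},\ldots,M_{i_k}\}\subseteq\{M_1,\ldots,M_t\}$ occurring as the set of leading monomials of $F_1,\ldots,F_k$, the assumption ${\mbox{Supp}}(F_j)\subseteq\Diamond=\Delta_\prec(I)$ places us in the setting of Corollary~\ref{corfoot}, which bounds the number of common roots in $V_{\mathbb{F}}(I)$ by $\#\Diamond-\sigma(M_{i_1},\ldots,M_{i_k})$. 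Subtracting from $\#(A_1\times\cdots\times A_m)=\#\Diamond$ yields at least $\sigma(M_{i_1},\ldots,M_{i_k})$ non-common roots. Since the polynomials are arbitrary subject only to the constraint that their leading monomials lie in $\{M_1,\ldots,M_t\}$, minimizing $\sigma$ over all $k$-subsets gives the uniform lower bound displayed in (\ref{eqother}).

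The identification of the $\sigma$-form with the $\mu$-form in (\ref{eqother}) should come from a divisor duality. The map $M\mapsto N/M$ is an involution on $\Diamond$, and $M_{i_j}$ divides $M\in\Diamond$ if and only if $N/M$ divides $N/M_{i_j}$. This bijectively matches $\{M\in\Diamond\mid M_{i_j}\text{ divides }M\text{ for some }j\}$ with $\{L\mid L\text{ divides some }N/M_{i_j}\}$, yielding the identity $\sigma(M_{i_1},\ldots,M_{i_k})=\mu(N/M_{i_1},\ldots,N/M_{i_k})$.

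For sharpness I would let $\{M_{i_1},\ldots,M_{i_k}\}$ be a $k$-subset realizing the minimum, write $M_{i_j}=X_1^{b_{j,1}}\cdots X_m^{b_{j,m}}$, fix any enumeration $A_l=\{a_{l,0},\ldots,a_{l,\#A_l-1}\}$ of each factor, and set
$$F_j:=\prod_{l=1}^m\prod_{s=0}^{b_{j,l}-1}(X_l-a_{l,s}).$$
Each $F_j$ has support in $\Diamond$ and leading monomial $M_{i_j}$ under every monomial ordering, and under the coordinate-wise bijection $(a_{1,s_1},\ldots,a_{m,s_m})\leftrightarrow X_1^{s_1}\cdots X_m^{s_m}$ the points of $A_1\times\cdots\times A_m$ on which $F_j$ does not vanish correspond exactly to the monomials in $\Diamond$ divisible by $M_{i_j}$. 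Taking the union over $j$ produces exactly $\sigma(M_{i_1},\ldots,M_{i_k})$ non-common roots, matching the bound. The main obstacle is essentially bookkeeping: one has to carry the divisor bijection cleanly through unions in the $\sigma$-$\mu$ identification, and verify that the chosen enumeration of the factors $A_l$ yields precisely the desired divisibility structure on non-roots. Once these combinatorial correspondences are set up, everything reduces to Corollary~\ref{corfoot}.
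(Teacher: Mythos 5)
Your proposal is correct and follows essentially the same route as the paper: the lower bound is read off from Corollary~\ref{corfoot} applied to the vanishing ideal of the grid, the $\sigma$--$\mu$ identity comes from the involution $M\mapsto N/M$ on $\Diamond$, and sharpness is witnessed by the same products of linear factors $H_N$. The paper simply leaves the first two steps implicit and only writes out the sharpness argument; your version fills in those steps explicitly.
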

\begin{proof}
We only need to demonstrate sharpness. To this end write $a_i=\# A_i$, $A_i=\{b_1^{(i)}, \ldots , b_{a_i}^{(i)}\}$ for $i=1, \ldots , m$. To each monomial $N\in \Delta_\prec (I)$ we associate the polynomial $$H_N:=\prod_{i=1}^m \prod_{j=1}^{\deg_{X_i}N} (X_i-b_j^{(i)})$$
which clearly has leading monomial $N$.
Then for any set of monomials $\{N_1, \ldots , N_s\}\subseteq \Delta_\prec (I)$ the number of non-common roots of $H_{N_1}, \ldots , H_{N_s}$ is precisely $\sigma(N_1, \ldots , N_s)$.
\end{proof}

A special case of Corollary~\ref{corsamesame} is when $\#A_1=\cdots =\#A_m=:a$ and there is only one polynomial $F$, the only information of which we have is its total degree $\deg F=d$. Writing $d=(\sum_{i=1}^v(a-1))+\ell$ where $0 \leq \ell < a-1$ we obtain the following upper bound on the number of roots
\begin{eqnarray}
\min \{\sigma (M) \mid M \in \Delta_\prec(I), \deg M =d\}&=&\sigma (X_1^{a-1} \cdots X_v^{a-1}X_{v+1}^\ell) \nonumber  \\
&=&(a-\ell)a^{m-(v+1)} \nonumber
\end{eqnarray} 
which is an incidence of the Alon-Füredi bound~\cite{alon1993covering} that in its most general version allows for $A_1, \ldots , A_m$ to be of different sizes. The general version of the Alon-Füredi bound can be deduced from Corollary~\ref{corsamesame} in a similar manner.

\begin{example}
Let ${\mathbb{F}}={\mathbb{F}}_8$, i.e.\ the field with $8$ elements, and consider $I=\langle X_1^8-X_1, X_2^8-X_2 \rangle \subseteq {\mathbb{F}}_8[X_1,X_2]$. Clearly, $V_{\mathbb{F}_8}(I)= {\mathbb{F}}_8^2$. We have $\Delta_\prec(I)=\{ X_1^{i_1}X_2^{i_2} \mid 0 \leq i_1,  i_2 < 8\}$ regardless of the choice of monomial ordering $\prec$. This set of course is the set $\Box$ from Definition~\ref{defthree}. Assume $M_1=X_1^3X_2$, $M_2=X_1^2X_2^2$, and $M_3=X_1X_2^3$ and that $\prec$ is the degree lexicographic ordering with $X_1 \prec X_2$ which implies that indeed $M_1, M_2, M_3$ are strictly consecutive. We have $\mu (M_1)=\mu(M_3)=8$, $\mu(M_2)=9$, $\mu (M_1,M_2)=\mu (M_2, M_3)=11$, $\mu (M_1, M_3)=12$, and $\mu (M_1, M_2, M_3)=13$. From Theorem~\ref{theint} we conclude that for each $A \subseteq {\mathbb{F}}_8^2$ of size at most $7$ there exist polynomials $F_1, F_2, F_3$, ${\mbox{lm}}(F_1)=M_1$, ${\mbox{lm}}(F_2)=M_2$, ${\mbox{lm}}(F_3)=M_3$ having all elements of $A$ as common roots. We similarly conclude that whenever $A$ is of size at most $10$ then there exist $F_1, F_2$ with ${\mbox{lm}}(F_1) \neq {\mbox{lm}}(F_2)$, ${\mbox{lm}}(F_1), {\mbox{lm}}(F_2) \in \{M_1, M_2, M_3\}$ with all elements of $A$ as common roots. Finally for any $A$ with $\#A <13$ there exists $F$ with ${\mbox{lm}}(F) \in \{M_1, M_2, M_3\}$ such that $F(a)=0$ for all $a \in A$. 
In the other direction $\sigma(M_1)=\sigma(M_3)=35$, $\sigma (M_2)=36$, $\sigma (M_1, M_2)=\sigma (M_2, M_3)=41$, $\sigma (M_1, M_3)=45$ and $\sigma (M_1, M_2, M_3)=46$ which by Corollary~\ref{corsamesame} tell us that any polynomial $F$ with ${\mbox{lm}}(F) \in \{M_1, M_3\}$ has at most $64-35=29$ roots and that if ${\mbox{lm}}(F)=M_2$ then it has at most $64-36=28$ roots. If ${\mbox{lm}}(F_1)=M_1$ and ${\mbox{lm}}(F_2)=M_2$ or if ${\mbox{lm}}(F_1)=M_2$ and ${\mbox{lm}}(F_2)=M_3$ then $F_1$ and $F_2$ have at most $64-41=23$. If ${\mbox{lm}}(F_1)=M_1$ and ${\mbox{lm}}(F_2)=M_3$ the number of common roots is at most $64-45=19$. Finally $F_1, F_2, F_3$ with ${\mbox{lm}}(F_1)=M_1$, ${\mbox{lm}}(F_2)=M_2$, and ${\mbox{lm}}(F_3)=M_3$ have at most $64-46=18$ roots in common.
\end{example}

The similarity in expression~(\ref{eqsnabeleinz}) and expression~(\ref{eqother}) suggests that there is a strong relationship between Theorem~\ref{theint} and Corollary~\ref{corsamesame} which we now explain. Note, however, that there in contrast to Theorem~\ref{theint} is in Corollary~\ref{corsamesame} no requirement that the monomials $M_1, \ldots , M_s$ should be strictly consecutive. 

\begin{remark}\label{remconnect}
Let $M_1, \ldots , M_t$ be as in Theorem~\ref{theint}. If ${\mathbb{F}}={\mathbb{F}}_q$ then let $A_1= \cdots = A_m = {\mathbb{F}}_q$. Otherwise, choose finite sets $A_1 , \ldots , A_m \subseteq {\mathbb{F}}$ with $\#A_i \geq \mu(M_1, \ldots , M_t)$ for $i=1, \ldots , m$ which we adjust in such a way that $A_1 \times \cdots \times A_ m\supseteq A$ holds whenever an $A$ is considered. Let $I= \langle \prod_{b\in A_1}(X_1-b), \ldots , \prod_{b \in A_m} (X_m-b) \rangle$. Consider the bijective evaluation map 
$${\mbox{ev}} : {\mathbb{F}}[X_1, \ldots , X_m]/I  \rightarrow {\mathbb{F}}^{n=\#A_1 \cdots \# A_m}$$
given by ${\mbox{ev}}(F+I)=(F(P_1), \ldots , F(P_n))$ where $\{P_1, \ldots , P_n\}=A_1\times \cdots \times A_m$. Consider 
\begin{eqnarray}
C_2&:=&{\mbox{Span}}_{\mathbb{F}}\{{\mbox{ev}}(N+I) \mid N \prec M_1\} \nonumber \\
C_1&:=&{\mbox{Span}}_{\mathbb{F}}\{{\mbox{ev}}(N+I) \mid N \preceq M_t\}. \nonumber
\end{eqnarray}
Clearly, $C_2 \subseteq C_1$, with codimension equal to $t$. For $1 \leq k \leq t$ consider the relative generalized Hamming weight~\cite{luoetal}
\begin{equation}
M_{k}(C_1,C_2):=\min \{ \# {\mbox{Supp}}(D)  \mid D \subseteq C_1 , D \cap C_2 =\{\vec{0}\}, \dim D = k \} \label{eqsnabelat}
\end{equation}
originally introduced for finite fields, but in the present paper applied to the case of any field. Here, ${\mbox{Supp}}(D)$ means the set of entries $$\{ i \mid i \in \{ 1, \ldots , n\}, c_i \neq 0 {\mbox{ for some }} (c_1, \ldots , c_n) \in D\}.$$
Clearly, for general $F_1, \ldots , F_k$ as in Theorem~\ref{theint} the smallest attainable number of non-common roots in $A_1 \times \cdots \times A_m$ equals $M_k(C_1,C_2)$ which as demonstrated in the coming sections equals~(\ref{eqother}). In the coming sections we shall also demonstrate that if $A\subseteq A_1 \times \cdots  \times A_m$ satisfies $\#A <M_{t-k+1}(C_2^\perp, C_1^\perp)$ then there exist $k$ linearly independent polynomials as in Theorem~\ref{theint} which all evaluates to zero on $A$, but that for some $A\subseteq A_1 \times \cdots \times A_m$ with $\#A=M_{t-k+1}(C_2^\perp, C_1^\perp)$ this does not hold. Here, the dual space (or null-space) is with respect to the usual inner product. Finally, as we shall demonstrate in the next two sections of the paper the right hand side of~(\ref{eqsnabeleinz}) serves as a lower bound on $M_{t-k+1}(C_2^\perp, C_1^\perp)$. We conclude that problem (I) and problem (II)  and our answers to them are basically identical, but applied to dual spaces. All claims of the present remark follow by inspection of the proof of Theorem~\ref{theint} in Section~\ref{seccorollary1}.
\end{remark}

We conclude this section by demonstrating the superiority of the footprint bound over other known bounds on roots of general single mulitvariate polynomials. Among these the generalized Alon-Füredi bound~\cite{bishnoi2018zeros} is the sharpest as it has as corollary the Alon-Füredi bound~\cite{alon1993covering} which in turn has as corollaries \cite{bishnoi2018zeros} Alon's combinatorial Nullstellensatz~\cite{alon1999c} as well as the Schwartz-Zippel bound~\cite{1979probabilistic,schwartz1980fast}. For common roots of sets of more than one general multivariate polynomial we are not aware of other efficient bounds than the footprint bound~(Theorem~\ref{thefoot}, Corollary~\ref{corfoot} and Corollary~\ref{corsamesame}). 

By inspection the generalized Alon-Füredi bound \cite[Thm.\ 1.2]{bishnoi2018zeros} can be reformulated as follows.

\begin{theorem}\label{thegenalonfuredi}
Consider finite sets $A_1, \ldots , A_m \subseteq {\mathbb{F}}$ and a polynomial $F\in {\mathbb{F}}[X_1, \ldots , X_m]$ with $d_i:=\deg_{X_i}F < \# A_i$, $i=1, \ldots , m$. Let $d$ be the total degree of $F$. The number of non-roots of $F$ in $A_1 \times \cdots \times A_m$ is at least
\begin{eqnarray}
\min \{ \sigma(M) \mid M {\mbox{ is a monomial}}, \deg M  = d, {\mbox{ and }} \deg_{X_i} M \leq d_i, i=1, \ldots , m\}. \label{eqthegen}
\end{eqnarray}
\end{theorem}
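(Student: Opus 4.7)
The plan is to obtain Theorem~\ref{thegenalonfuredi} directly as the single-polynomial specialization of Corollary~\ref{corfoot}, applied to the standard ideal associated with a finite Cartesian product point-set. Concretely, I would take
$$I = \left\langle \prod_{b\in A_1}(X_1-b),\, \ldots,\, \prod_{b\in A_m}(X_m-b)\right\rangle,$$
so that $V_{\mathbb{F}}(I) = A_1 \times \cdots \times A_m$ and, as noted in the text preceding Corollary~\ref{corsamesame}, $\Delta_\prec(I) = \Diamond$ independently of the monomial ordering $\prec$. Thus $\sigma$ in (\ref{eqthegen}), computed with respect to $\Diamond$, is exactly the $\sigma$ appearing in Corollary~\ref{corfoot} for this $I$.

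Next, I would equip $\mathbb{F}[X_1,\ldots,X_m]$ with a graded monomial ordering (for instance graded lexicographic). The hypothesis $d_i = \deg_{X_i} F < \#A_i$ ensures that every monomial occurring in the support of $F$ lies in $\Diamond$, so $F \in \mathrm{Span}_{\mathbb{F}}\,\Delta_\prec(I)$. Setting $M := \mathrm{lm}_\prec(F)$, the fact that $\prec$ is graded forces $\deg M = \deg F = d$, while $M$ being a term of $F$ forces $\deg_{X_i} M \leq d_i$ for every $i$. Hence $M$ is one of the monomials over which the minimum in (\ref{eqthegen}) is taken.

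Finally, I would apply Corollary~\ref{corfoot} with $s=1$ and $F_1 = F$: the number of common roots of $F$ in $V_{\mathbb{F}}(I)$ is at most $\#\Delta_\prec(I) - \sigma(M)$, so the number of non-roots is at least $\sigma(M)$, which is in turn at least the minimum on the right-hand side of (\ref{eqthegen}). The only step demanding any care is the choice of a graded ordering so that $\deg \mathrm{lm}_\prec(F) = \deg F$, and this is scarcely an obstacle since any graded ordering works; no combinatorial argument beyond the footprint bound is required, which underscores the remark in the paragraph preceding the theorem that the footprint bound already subsumes the generalized Alon--Füredi bound.
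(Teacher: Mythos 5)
Your proposal is correct and follows essentially the same route as the paper: fix a graded monomial ordering so that $\mathrm{lm}(F)$ lies in the set over which the minimum in (\ref{eqthegen}) is taken, observe the support condition from $d_i<\#A_i$, and invoke the footprint bound (the paper cites Corollary~\ref{corsamesame}, which is itself the Cartesian-product specialization of Corollary~\ref{corfoot} you use). The only cosmetic difference is that you make the intermediate verification $F\in\mathrm{Span}_{\mathbb{F}}\,\Delta_\prec(I)$ explicit, which the paper leaves implicit.
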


In~\cite{bishnoiblog} it is already noted that the generalized Alon-Füredi bound is a consequence of the footprint bound. We here give some more details including discussing when the two bounds may or may not coincide. Consider a graded monomial ordering, i.e. a monomial ordering $\prec$ satisfying that if $\deg N_1 < \deg N_2$ then necessarily $N_1 \prec N_2$. There are many such orderings, including $m !$ degree lexicographic orderings where in addition to the above property ties are broken by the rule that if $K_1$ and $K_2$ are of the same degree then $K_1 \prec K_2$ if $K_1 \prec_{lex} K_2$ for the given chosen lexicographic ordering. For a graded monomial ordering for sure the leading monomial ${\mbox{lm}}(F)$ belongs to the set considered in~(\ref{eqthegen})
and Theorem~\ref{thegenalonfuredi} follows immediately from Corollary~\ref{corsamesame}. 

We now comment on the possible tightness of the generalized Alon-Füredi bound. If $d_1+\cdots +d_m=d$ then there is only one monomial in the support ${\mbox{Supp}} (F)$ of highest degree, hence the two bounds coincide. Moreover, $F$ is a monomial ordering invariant polynomial~\cite[Def.\ 8]{geil2021multivariate}, i.e.\  all monomials in the support of $F$ divides the unique monomial of highest degree. As shown in~\cite[Cor.\ 9]{geil2021multivariate} the only polynomials that attain the footprint bound in such a case are
 the products of linear terms as described in the proof of Corollary~\ref{corsamesame} (see also~\cite[Sec.\ 3.3]{bishnoi2018zeros}). Imagine next that $d_1+\cdots +d_m\neq d$, in which case there need to be more than one monomial in the argument of~(\ref{eqthegen}). If for some $i$ no monomial $M$ in the support of $F$ satisfies simultaneously $\deg_{X_i} M = d_i$ and $\deg M=d$ then for a  lexicographic ordering with $X_j \prec_{lex}X_i$ for all $j \neq i$ the footprint bound will produce a strictly  tighter result than Theorem~\ref{thegenalonfuredi}.  Example~\ref{exhermite} below illustrates the situation. Assume on the other hand that for all $i$ such a monomial exists, then a nesecary condition for Theorem~\ref{thegenalonfuredi} to produce as sharp a result as the footprint bound is that for each of the graded monomial orderings the largest monomial in the argument of~(\ref{eqthegen}) is of $\sigma$-value equal to~(\ref{eqthegen}) which indeed is a very restrictive assumption on the form of $F$. 

\begin{example}\label{exhermite}
Consider the Hermitian polynomial $F(X_1,X_2)=X_1^{q+1}-X_2^q-X_2 \in {\mathbb{F}}_{q^2}[X_1, X_2]$ where $q$ is a prime-power. We have $d_1=q+1$, $d_2=q$ and $d=q+1$, hence the argument of~(\ref{eqthegen}) is $\{ X_1X_2^q, X_1^{2}X_2^{q-1}, \ldots , X_1^{q+1}\}$ and Theorem~\ref{thegenalonfuredi} therefore tells us that the number of roots of $F$ in ${\mathbb{F}}_{q^2} \times {\mathbb{F}}_{q^2}$ is at most $(q+1)q^2=q^3+q^2$. Applying instead the footprint bound with the monomial ordering being the lexicographic ordering with $X_1 \prec_{lex} X_2$ we obtain ${\mbox{lm}}(F)=X_2^q$ from which we see that $F$ can at most have $q^3$ roots. It is well-known that actually $F$ possesses precisely this amount of affine roots.
\end{example}

\section{Results in linear algebra setting}\label{secclean}
As a preperation for providing in Section~\ref{seccorollary1} a proof of Theorem~\ref{theint} as well as giving the missing details of Remark~\ref{remconnect}, in the current section we treat a problem similar to (II), but in the much more general setting of linear codes over arbitrary fields rather than in the context of multivariate polynomials only. To support future applications beyond multivariate polynomials and for the sake of completeness we also include a treatment of a problem similar to (I) in the setting of linear codes. The main result of the section is Theorem~\ref{thefull} which is the linear code version of Theorem~\ref{theint}. 

Algebraic coding theory has been a very active research area for several decades leading to a great number of rich constructions of codes having desirable properties. At the heart of the work is the question of estimating the number of roots of polynomial-like functions in various ${\mathbb{F}}_q$-algebras. One attempt to unify some of the theory is the Feng-Rao theory for the ${\mathbb{F}}_q$-algebra $({\mathbb{F}}_q^n, +, \ast)$ where $\ast$ is the component-wise (or element-wise) product given by 
$$(c_1, \ldots , c_n) \ast (d_1, \ldots , d_n)=(c_1d_1, \ldots , c_nd_n).$$
In this section we start by enhancing such results to hold over any field ${\mathbb{F}}$, i.e.\ we demonstrate that the proof of the Feng-Rao bounds holds regardless of underlying field, our exposition closely following that of~\cite{AG, geithom}. With such theory in hand and by applying Forney's two duality lemmas~\cite{forney94} our results concerning multivariate polynomials will follow as easy corollaries.

Consider a subspace $D \subseteq {\mathbb{F}}^n$ and define the support as
$${\mbox{Supp}}(D):=\{i \in {\mathcal{I}}  \mid c_i \neq 0 {\mbox{ for some }}(c_1, \ldots , c_n) \in D\}$$
(we already used this notation in Remark~\ref{remconnect}). 
Note, that this is a different support function than the one we have employed for polynomials earlier in the paper. 
We are interested in establishing information on the size of ${\mbox{Supp}}(D)$ which we call $w_H(D) :=\# {\mbox{Supp}}(D)$. Here, the subscript $H$ refers to the Hamming weight from coding theory. 

Let $B=\{ {\vec{b}}_1, \ldots ,  {\vec{b}}_n\}$ be an ordered basis
for ${\mathbb{F}}^n$. Define
$L_0:=\{ \vec{0} \}$ and for $i=1, \ldots , n$ $L_i:={\mbox{Span}}_{\mathbb{F}}\{ 
\vec{b}_1, \ldots ,\vec{b}_{i}\}$. Further define $\bar{\rho}: {\mathbb{F}}^n \rightarrow \{0, 1, \ldots ,n\}$ by $\bar{\rho}(\vec{0}):=0$ and for $\vec{c} \neq \vec{0}$ 
$\bar{\rho}(\vec{c}):=i$ if $\vec{c} \in L_i \backslash L_{i-1}$ and $m(\vec{c}) :=\min \{ m \mid \vec{c} \cdot \vec{b}_m \neq 0\}$. Here, $\cdot$ is the usual inner product. 
\begin{lemma}\label{lemold}
Consider a subspace $D \subseteq {\mathbb{F}}^n$ of dimension $k\geq 1$. There exists a basis $\{\vec{c}_1, \ldots , \vec{c}_k\}$ for $D$ with $\bar{\rho}(\vec{c}_1), \ldots , \bar{\rho}(\vec{c}_k)$ being pairwise different. For such a basis it holds that $m(\vec{c}_1), \ldots , m(\vec{c}_k)$ are also pairwise different. Both the $\bar{\rho}$-values and the $m$-values are invariants of $D$. Hence, if  $\vec{c} \in D \backslash \{ \vec{0} \} $ then $\bar{\rho}(\vec{c})$ and $m(\vec{c})$, respectively, belongs to the above mentioned sets, respectively. 
\end{lemma}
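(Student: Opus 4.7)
The plan is to construct, by iterative row-reduction with respect to the flag $L_0 \subset L_1 \subset \cdots \subset L_n$, a basis realizing both distinctness properties, and then to extract invariance from intrinsic subspace filtrations. First I would produce a basis of $D$ with pairwise distinct $\bar{\rho}$-values by Gaussian-type elimination: starting from any basis of $D$, whenever two current vectors share $\bar{\rho}$-value $i$, subtract a suitable scalar multiple of one from the other to cancel the $\vec{b}_i$-coefficient and strictly lower the $\bar{\rho}$-value of the modified vector. Since $\bar{\rho}$ is bounded by $n$ and no basis vector can become $\vec{0}$ without violating linear independence, the process terminates in finitely many steps.

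The main obstacle is upgrading this basis so that the $m$-values are also pairwise distinct. I would proceed inductively on $j$, having ordered the vectors so that $\bar{\rho}(\vec{c}_1) < \cdots < \bar{\rho}(\vec{c}_k)$, and maintain as invariant that $\vec{c}_1, \ldots, \vec{c}_{j-1}$ already have distinct $m$-values (as well as distinct $\bar{\rho}$-values, unchanged throughout). If $m(\vec{c}_j)$ coincides with some $m(\vec{c}_i)$ for $i < j$, replace $\vec{c}_j$ by $\vec{c}_j - \alpha \vec{c}_i$ where $\alpha$ is the unique scalar cancelling $\vec{c}_j \cdot \vec{b}_{m(\vec{c}_j)}$; this strictly increases $m(\vec{c}_j)$ while preserving $\bar{\rho}(\vec{c}_j)$, because $\bar{\rho}(\vec{c}_i) < \bar{\rho}(\vec{c}_j)$ means adding a multiple of $\vec{c}_i$ cannot alter the top $\vec{b}_{\bar{\rho}(\vec{c}_j)}$-coefficient. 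The recursion terminates: $m$ strictly increases and is bounded above by $n+1$, and the updated $\vec{c}_j$ never becomes $\vec{0}$, as that would contradict its linear independence from $\{\vec{c}_1, \ldots, \vec{c}_{j-1}\}$.

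For invariance I would introduce intrinsic characterizations of the two value-sets. For $\bar{\rho}$, any basis of $D$ with distinct $\bar{\rho}$-values realizes exactly $R(D) := \{i : \dim(D \cap L_i) > \dim(D \cap L_{i-1})\}$, which depends only on $D$. For $m$, define the descending filtration $N_i := \{\vec{v} \in {\mathbb{F}}^n : \vec{v} \cdot \vec{b}_j = 0 \text{ for all } j < i\}$, with $N_{n+1} = \{\vec{0}\}$, so that by definition $m(\vec{c}) = i$ iff $\vec{c} \in N_i \setminus N_{i+1}$. A basis with distinct $m$-values then realizes exactly $M(D) := \{i : \dim(D \cap N_i) > \dim(D \cap N_{i+1})\}$, which again depends only on $D$. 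Elementary dimension counts along both filtrations give $|R(D)| = |M(D)| = k$, confirming that these intrinsic sets are precisely the sets of $\bar{\rho}$- and $m$-values of any basis having the respective distinctness property.

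Finally, for any $\vec{c} \in D \setminus \{\vec{0}\}$: writing $\vec{c} = \sum \alpha_i \vec{c}_i$ in the constructed basis, distinctness of the $\bar{\rho}(\vec{c}_i)$ forces $\bar{\rho}(\vec{c})$ to equal the unique maximum $\bar{\rho}(\vec{c}_{i_0})$ over indices with $\alpha_{i_0} \neq 0$, hence $\bar{\rho}(\vec{c}) \in R(D)$. The corresponding statement for $m(\vec{c})$ follows directly from the intrinsic characterization: since $\vec{c} \in (D \cap N_{m(\vec{c})}) \setminus (D \cap N_{m(\vec{c})+1})$, necessarily $m(\vec{c}) \in M(D)$.
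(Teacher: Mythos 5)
The paper states this lemma without proof, so there is nothing to compare against; assessing your argument on its own merits, it is correct and complete. The two-stage Gaussian elimination (first down the $L$-flag to separate $\bar{\rho}$-values, then up the $m$-values using that adding a multiple of $\vec{c}_i$ with $\bar{\rho}(\vec{c}_i)<\bar{\rho}(\vec{c}_j)$ cannot disturb $\bar{\rho}(\vec{c}_j)$) terminates for the reasons you give, and the intrinsic characterizations $R(D)$ and $M(D)$ via the filtrations $D\cap L_i$ and $D\cap N_i$, together with the dimension counts, cleanly establish invariance and the membership claim for arbitrary $\vec{c}\in D\setminus\{\vec{0}\}$.

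One remark worth making explicit: the phrase \emph{``For such a basis it holds that $m(\vec{c}_1),\ldots,m(\vec{c}_k)$ are also pairwise different''} cannot be read as saying that \emph{every} basis with distinct $\bar{\rho}$-values automatically has distinct $m$-values. That is false already for $n=k=2$ with $B$ the standard basis and $\vec{c}_1=(1,0)$, $\vec{c}_2=(1,1)$: here $\bar{\rho}(\vec{c}_1)=1\neq 2=\bar{\rho}(\vec{c}_2)$ but $m(\vec{c}_1)=m(\vec{c}_2)=1$. Your reading --- that the basis produced in the first stage can be \emph{further modified} (without losing the $\bar{\rho}$-separation) so that the $m$-values also become distinct, i.e.\ a single basis exists realizing both value sets --- is the only tenable interpretation, and it is exactly what your second stage proves. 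This is also stronger than strictly necessary for the later applications in the paper, where the proofs of the two inequalities of Theorem~\ref{egensaet1} may use two different bases; invariance of $\bar{\rho}(D)$ and $m(D)$ is what is actually exploited, and your $R(D)$/$M(D)$ characterizations deliver that directly.
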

By Lemma~\ref{lemold} the following definition is clear
$$\bar{\rho}(D):=\{ e \mid  1 \leq e \leq n, \exists \vec{c} \in D \backslash \{\vec{0}\} {\mbox{ with }} \vec{c} \in L_{e} \backslash L_{e-1} \}$$
$$ m(D):=\{m \mid 1 \leq m \leq n, \exists \vec{c} \in  D \backslash \{\vec{0}\} {\mbox{ with }} \vec{c} \in L^\perp_{m-1} \backslash L^\perp_{m} \}$$
and we have $\#  \bar{\rho}(D)=\# m(D) =k$.\\

We next introduce the concept of one-way well-behaving pairs which is more general than the well-known concepts of well-behaving pairs and weakly well-behaving pairs~\cite{matsumotoequiv} allowing for a more powerful description~\cite{geithom} with a wider range of possible future applications of our results.
Consider an additional ordered basis $B^\prime=\{\vec{b}^\prime_1,\ldots \vec{b}^{\prime}_n\}$  (which may or may not be equal to $B$).  Let ${\mathcal{I}}:=\{1,
2, \ldots , n\}$. An ordered pair $(i,j) \in {\mathcal{I}}^2$ is said
to be one-way well-behaving (OWB)
if $\bar{\rho}(\vec{b}_u \ast \vec{b}^{\prime}_j) < \bar{\rho}(\vec{b}_i \ast \vec{b}^{\prime}_j)$
for $u<i$.
Given bases $B, B^{\prime}$ as above consider for $l=1, 2, \ldots ,n$ and $i=1, 2, \ldots ,n$ the following sets
\begin{eqnarray}
{\mbox{V}}_l&:=&\{i \in I \mid  \bar{\rho}(\vec{b}_i \ast \vec{b}_j^{\prime})=l {\mbox{ for some }}
\vec{b}_j^{\prime} \in B^{\prime}   {\mbox{ with  }}  (i,j) {\mbox{ OWB}}\} \\
\Lambda_i&:=&\{ l \in I \mid \bar{\rho}(\vec{b}_{i}\ast \vec{b}_j^{\prime}) = l {\mbox{ for
    some }}   \vec{b}_j^{\prime} \in B^{\prime} {\mbox{ with }}  (i,j)  {\mbox{ OWB}}\}
\end{eqnarray}
\begin{definition}\label{defmusic}
For $\{l_1, \ldots ,l_k\} \subseteq I$ and $\{i_1, \ldots ,i_k\} \subseteq I$
 define
\begin{eqnarray}
\bar{\mu}(l_1, \ldots l_k)&:=&\# \left(  \left( \cup_{s=1, \ldots ,
  k}{\mbox{V}}_{l_s}\right) \cup \{l_1, \ldots ,l_k \}\right) \label{muhh} \\
\bar{\sigma} ( i_1, \ldots i_k   )&:=&\# \left( \left( \cup_{s=1,
      \ldots , k}  \Lambda_{i_s}\right) \cup \{ i_1,  \ldots ,i_k\}\right)
\end{eqnarray} 
\end{definition}

\begin{theorem}\label{egensaet1}
Let $D \subseteq {\mathbb{F}}^n$. Then it holds that 
\begin{eqnarray}
w_H(D)&\geq &\bar{\sigma}(\bar{\rho}(D))  \label{eqprimary} \\
w_H(D)&\geq & \bar{\mu}(m(D)). \label{eqdual}
\end{eqnarray}
\end{theorem}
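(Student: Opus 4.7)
The plan is to prove both inequalities by the same strategy: exhibit a family of vectors of cardinality $\bar{\sigma}(\bar{\rho}(D))$ (respectively $\bar{\mu}(m(D))$) that are linearly independent and all supported on ${\mbox{Supp}}(D)$, so the count is bounded above by $w_H(D) = \dim {\mathbb{F}}^{{\mbox{Supp}}(D)}$. The key algebraic ingredient throughout is the adjoint identity $\vec{c} \cdot (\vec{a} \ast \vec{b}) = (\vec{c} \ast \vec{a}) \cdot \vec{b}$ together with the trivial inclusion ${\mbox{Supp}}(\vec{v} \ast \vec{w}) \subseteq {\mbox{Supp}}(\vec{v})$, which guarantees that anything of the form $\vec{c} \ast \vec{b}'_j$ for $\vec{c} \in D$ is still supported in ${\mbox{Supp}}(D)$.

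For (\ref{eqprimary}), invoke the preceding lemma to obtain a basis $\vec{c}_1, \ldots , \vec{c}_k$ of $D$ with pairwise distinct $\bar{\rho}$-values $i_1 < \cdots < i_k$. For each $s$ and each $l \in \Lambda_{i_s}$ choose $\vec{b}'_j \in B'$ with $(i_s, j)$ OWB and $\bar{\rho}(\vec{b}_{i_s} \ast \vec{b}'_j) = l$. Writing $\vec{c}_s = \sum_{u \leq i_s} \alpha_u \vec{b}_u$ with $\alpha_{i_s} \neq 0$, the OWB hypothesis yields $\bar{\rho}(\vec{b}_u \ast \vec{b}'_j) < l$ for every $u < i_s$, so $\vec{c}_s \ast \vec{b}'_j = \alpha_{i_s} \vec{b}_{i_s} \ast \vec{b}'_j + (\text{lower } \bar{\rho}\text{-terms})$ has $\bar{\rho}$-value exactly $l$. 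For every value in $\big(\bigcup_s \Lambda_{i_s}\big) \cup \{i_1, \ldots, i_k\}$ pick a single vector among $\{\vec{c}_s\} \cup \{\vec{c}_s \ast \vec{b}'_j\}$ realizing that $\bar{\rho}$-value; these vectors have pairwise distinct $\bar{\rho}$-values, are therefore linearly independent, and all lie in ${\mathbb{F}}^{{\mbox{Supp}}(D)}$, giving the claimed bound.

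For (\ref{eqdual}), start instead from vectors $\vec{c}_1, \ldots , \vec{c}_k \in D$ with pairwise distinct $m$-values $m_1 < \cdots < m_k$ (also guaranteed by the preceding lemma). For each $s$ and each $i \in V_{m_s}$ choose $j$ with $(i,j)$ OWB and $\bar{\rho}(\vec{b}_i \ast \vec{b}'_j) = m_s$, and expand $\vec{b}_i \ast \vec{b}'_j = \sum_{u \leq m_s} \gamma_u \vec{b}_u$ with $\gamma_{m_s} \neq 0$. Since $\vec{c}_s \cdot \vec{b}_u = 0$ for $u < m_s$ and $\vec{c}_s \cdot \vec{b}_{m_s} \neq 0$, the adjoint identity yields $(\vec{c}_s \ast \vec{b}'_j) \cdot \vec{b}_i = \vec{c}_s \cdot (\vec{b}_i \ast \vec{b}'_j) = \gamma_{m_s}(\vec{c}_s \cdot \vec{b}_{m_s}) \neq 0$; analogously, for $u < i$ the OWB property places $\vec{b}_u \ast \vec{b}'_j$ inside $L_{m_s - 1}$, whence $(\vec{c}_s \ast \vec{b}'_j) \cdot \vec{b}_u = 0$. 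Therefore $m(\vec{c}_s \ast \vec{b}'_j) = i$. Selecting one representative per value in $\big(\bigcup_s V_{m_s}\big) \cup \{m_1, \ldots, m_k\}$ yields $\bar{\mu}(m(D))$ vectors with pairwise distinct $m$-values — hence linearly independent — all supported on ${\mbox{Supp}}(D)$, proving (\ref{eqdual}).

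The main obstacle is the bookkeeping when several $(s, l)$ or $(s, i)$ pairs produce vectors sharing a common $\bar{\rho}$- or $m$-value: this is precisely what the set-theoretic union in the definitions of $\bar{\sigma}$ and $\bar{\mu}$ absorbs, so keeping one representative per index in the union is enough. The transition from finite fields to arbitrary ${\mathbb{F}}$ is painless because every step uses only bilinearity of $\cdot$, the identity $\vec{c} \cdot (\vec{a} \ast \vec{b}) = (\vec{c} \ast \vec{a}) \cdot \vec{b}$, and the standard fact that distinct $\bar{\rho}$-values (resp.\ distinct $m$-values) force linear independence — none of which relies on finiteness of the ground field.
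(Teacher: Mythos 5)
Your proof of inequality (\ref{eqprimary}) is correct and follows essentially the same route as the paper (which repeats the argument of~\cite{AG}[Th.~10]): build the vectors $\vec{c}_s \ast \vec{b}'_j$ (and $\vec{c}_s$ itself), observe they have the prescribed, pairwise distinct $\bar{\rho}$-values, and conclude linear independence inside ${\mathbb{F}}^{{\mbox{Supp}}(D)}$.

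Your proof of inequality (\ref{eqdual}) is also correct, but it takes a genuinely different and arguably cleaner route than the paper's. The paper (following~\cite{geithom}[Th.~1]) builds an auxiliary ``test space'' $T = {\mbox{Span}}_{\mathbb{F}}\{\vec{b}_{i_1},\ldots,\vec{b}_{i_\gamma}\}$ of dimension $\gamma = \bar{\mu}(m(D))$, shows that every non-zero $\vec{r} \in T$ satisfies $\vec{r} \ast \vec{c} \neq \vec{0}$ for some $\vec{c} \in D$, and concludes that the projection $T \to {\mathbb{F}}^{{\mbox{Supp}}(D)}$ is injective. You instead construct $\bar{\mu}(m(D))$ explicit vectors supported on ${\mbox{Supp}}(D)$ — namely $\vec{c}_s$ and $\vec{c}_s \ast \vec{b}'_j$ — and show via the identity $(\vec{c}_s \ast \vec{b}'_j) \cdot \vec{b}_u = \vec{c}_s \cdot (\vec{b}_u \ast \vec{b}'_j)$ that their $m$-values are exactly the elements of $(\cup_s V_{m_s}) \cup \{m_1,\ldots,m_k\}$, hence pairwise distinct and therefore linearly independent. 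This makes (\ref{eqdual}) a genuine mirror image of (\ref{eqprimary}), swapping $\bar{\rho}$-values for $m$-values and $\Lambda_i$ for $V_l$; what it buys is a uniform template for both halves, at the modest cost of verifying the ``echelon in $m$-values implies linear independence'' step (which is standard and which you correctly invoke). Both proofs are valid; yours is the more symmetric presentation.
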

\begin{proof}
By inspection the proofs in~\cite{AG} and \cite{geithom} do not use any assumption on the field under consideration. \\
To prove~(\ref{eqprimary}) we therefore repeat the proof of \cite{AG}[Thm.\ 10] which with our notation goes as follows. 
Consider a basis $\{\vec{c}_1, \ldots , \vec{c}_k  \}$ for $D$ as in Lemma~\ref{lemold}. 
Write $e_u:=\bar{\rho}(\vec{c}_u)$ for $u=1, \ldots  k$ and observe that if $(e_u,j)$ is OWB for some $j \in \{1, \ldots , n\}$ and $\bar{\rho}(\vec{b}_{e_u} \ast \vec{b}^\prime_j)=\ell$ then by the very definition of OWB also $\bar{\rho}(\vec{c}_u \ast \vec{b}_j^\prime) =\ell$. Hence,
$$W:=\cup_{u=1}^k \{ \vec{c}_u \ast \vec{b}^\prime_j \mid (a_u, j) {\mbox{ is OWB}}\}$$
contains at least $\# \left( \cup_{u=1}^k \Lambda_{e_u} \right)$ linearly independent vectors. Consequently, 
$$W^\prime:=W \cup \{\vec{c}_u \ast (1, \ldots , 1) \mid u=1, \ldots , k\}$$
contains at least $\bar{\sigma}(e_1, \ldots , e_k)$ linearly independent vectors. From this we conclude 
$$\#{\mbox{Supp}} D \geq \dim {\mbox{ Span}}_{\mathbb{F}} W^\prime =\bar{\sigma} (\bar{\rho}(D)).$$
To prove~(\ref{eqdual}) we repeat the proof of the last part of~\cite{geithom}[Thm.\ 1]. Write 
$\gamma:=\bar{\mu}(m_1:=m(\vec{c}_1), \ldots , m_k:=m(\vec{c}_t))$ and
$$
\begin{array}{l}
\{i_1, \ldots , i_\gamma\}:=\\
\cup_{s=1}^{k} \left( \{i \in {\mathcal{I}} \mid \exists \vec{b}^\prime_j \in {\mathcal{B}}^\prime {\mbox{ with }} \bar{\rho}(\vec{b}_i \ast \vec{b}^\prime_j)=m_s {\mbox{ and }} (i,j) {\mbox{ OWB}} \} \cup \{m_s\} \right) 
\end{array}
$$
where without of loss of generality we assume $i_1 < \cdots <i_\gamma$. For $1 \leq h \leq  \gamma$ consider
$$\vec{r}_h=\sum_{v=1}^h \alpha_v \vec{b}_{i_v}, \, \, \alpha_v \in {\mathbb{F}}, \, \, \alpha_h \neq 0$$
and let $T$ be the vector-space consisting of all possible $\vec{r}_h$ as well as $\vec{0}$. \\
If $i_h \in \{m_1, \ldots , m_k\}$ then by the very definition of the function $m$ we have $\vec{r}_h \ast \vec{c}_h \neq 0$ and consequently $\vec{r}_h \ast \vec{c}_h \neq \vec{0}$. If $i_h \notin \{m_1, \ldots , m_k\}$ then there exists a $j$ and an $m_u$, $u\in \{1, \ldots  , k\}$ such that $\bar{\rho} (\vec{b}_{i_h} \ast \vec{b}^\prime_j)=m_u$ with $(i_h,j)$ OWB. It follows that $(\vec{r}_h  \ast \vec{b}^\prime_j)\cdot \vec{c}_u \neq 0$ from which we deduce that $\vec{r}_h \ast \vec{c}_u \neq \vec{0}$. In conclusion for every non-zero element $\vec{r}_h$ of $T$ there exists a $\vec{c} \in D$ with $\vec{r}_h \ast \vec{c} \neq \vec{0}$. It follows that $\#{\mbox{Supp}}(D) \geq \dim T =\gamma$, and we are through.
\end{proof}

\begin{remark}
For the purpose of the present paper the reader may think of ${\mathcal{B}}$ and ${\mathcal{B}}^\prime$ as the same basis. We note, that our results hold also in the general setting where ${\mathcal{B}}^\prime$ is any set, not necessarily a basis. 
\end{remark}

To address (II) at the level of linear algebra it is enough to combine~(\ref{eqdual}) with Forney's duality-lemmas from~\cite{forney94} which we now demonstrate. The concepts of puncturing and projection, respectively, are given as follows. Given a subspace $C \subseteq {\mathbb{F}}^n$ and $A \subseteq {\mathcal{I}}$ we write
$$C_A:=\{ \vec{c}= (c_1, \ldots , c_n) \in C \mid c_i=0 {\mbox{ for all }}i \in {\mathcal{I}} \backslash A=:\bar{A}\}.$$
For $\vec{c}=(c_1, \ldots , c_n)$ we define ${\mathcal{P}}_A(\vec{c}):=(d_1, \ldots , d_n)$ where $d_i:=c_i$ whenever $i \in A$ and $d_i:=0$ otherwise from which we define
$${\mathcal{P}}_A(C):= \{ {\mathcal{P}}_A(\vec{c}) \mid  \vec{c} \in C\}.$$
Forney's duality lemmas \cite{forney94}[Lem.\ 1 and Lem.\ 2] read
\begin{eqnarray}
\dim C&=& \dim C_{\bar{A}}+\dim {\mathcal{P}}_A (C) \label{forney1} \\
\# A&=&\dim {\mathcal{P}}_A(C)+\dim (C^\perp)_A \label{forney2}
\end{eqnarray}
To ease the exposition we now recall the concept of relative generalized Hamming weights, but here described in the setting of arbitrary field (we already applied such notion in~(\ref{eqsnabelat})). Given $C_2 \subsetneq C_1 \subseteq {\mathbb{F}}^n$ write $k_2=\dim C_2$, $k_1=\dim C_1$ and $t=k_1-k_2$. For $1\leq k \leq t$ we define 
$$M_{k}(C_1,C_2):=\min \{ \# {\mbox{Supp}}(D)  \mid D \subseteq C_1 , D \cap C_2 =\{\vec{0}\}, \dim D = k \}$$
and similarly for $C_1^\perp \subsetneq C_2^\perp$. \\

Observe, that in the particular case
\begin{eqnarray}
C_2&=&{\mbox{Span}}_{\mathbb{F}} \{ \vec{b}_1, \ldots , \vec{b}_{k_2}\} \nonumber \\
C_1&=&{\mbox{Span}}_{\mathbb{F}} \{ \vec{b}_1, \ldots , \vec{b}_{k_1}\} \nonumber
\end{eqnarray}
by Theorem~\ref{egensaet1} we obtain
\begin{eqnarray}
M_{k}(C_1,C_2)&\geq &\min \{ \bar{\sigma}( e_1, \ldots , e_k) \mid k_2 < e_1 < \cdots <e_k \leq k_1 \} \label{eqbndfirst} \\
M_{k}(C_2^\perp,C_1^\perp)&\geq &\min \{ \bar{\mu}( e_1, \ldots , e_k) \mid k_2 < e_1 < \cdots <e_k \leq k_1 \}. \label{eqbndsecond}
\end{eqnarray}
We are now ready to state our answer to (II) in the setting of linear algebra, based on which in the next section we shall deduce Theorem~\ref{theint}. 
\begin{theorem}\label{thefull}
Let $1 \leq k_2 < k_1 \leq n$ and consider $C_2={\mbox{Span}}_{\mathbb{F}}\{ \vec{b}_1, \ldots , \vec{b}_{k_2}\} \subseteq C_1={\mbox{Span}}_{\mathbb{F}}\{\vec{b}_1, \ldots , \vec{b}_{k_1} \}$. Write $t=k_1-k_2$ and consider $1 \leq k \leq t$. For all $A \subseteq {\mathcal{I}}$ with 
\begin{equation}
\#A < \min \{ \bar{\mu}(i_1, \ldots , i_{t-k+1}) \mid k_2 < i_1 < \cdots <i_{t-k+1} \leq k_1\}   \label{eqstrictly}
\end{equation}
there exist $k$ linearly independent vectors in $C_1 \backslash C_2$ that are identically equal to $0$ on $A$.\\
When $M_{t-k+1}(C_2^\perp, C_1^\perp)$ equals the right-hand side of~(\ref{eqstrictly}) for some $A$ of size equal to this value there does not exist $k$ such linearly independent vectors.  
\end{theorem}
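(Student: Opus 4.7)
The plan is to translate the statement, via Forney's duality lemmas~(\ref{forney1}) and~(\ref{forney2}), into an inequality on dimensions of punctured duals, and then invoke the bound~(\ref{eqbndsecond}) combined with the definition of the relative generalized Hamming weight $M_{t-k+1}(C_2^\perp,C_1^\perp)$.

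First I would reformulate "there exist $k$ linearly independent vectors in $C_1 \setminus C_2$ vanishing on $A$" as the existence of a $k$-dimensional subspace $E \subseteq C_{1,\bar{A}}$ with $E \cap C_2 = \{\vec{0}\}$; this is equivalent to $\dim C_{1,\bar{A}} - \dim(C_{1,\bar{A}} \cap C_2) \ge k$, and since $C_{1,\bar{A}} \cap C_2 = C_{2,\bar{A}}$ (using $C_2 \subseteq C_1$), to $\dim C_{1,\bar{A}} - \dim C_{2,\bar{A}} \ge k$. Applying~(\ref{forney1}) to both $C_1$ and $C_2$ rewrites this as $\dim \mathcal{P}_A(C_1) - \dim \mathcal{P}_A(C_2) \le t-k$, and then~(\ref{forney2}) turns it into
\[
\dim (C_2^\perp)_A - \dim (C_1^\perp)_A \le t - k.
\]

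For the main existence claim I would argue by contrapositive. Suppose the above inequality fails, i.e.\ $\dim (C_2^\perp)_A - \dim (C_1^\perp)_A \ge t-k+1$. Choose a complement of $(C_1^\perp)_A = (C_2^\perp)_A \cap C_1^\perp$ inside $(C_2^\perp)_A$ and take inside it a subspace $D$ of dimension $t-k+1$. Then $D \subseteq C_2^\perp$, $D \cap C_1^\perp = \{\vec{0}\}$, $\dim D = t-k+1$, and $\mathrm{Supp}(D) \subseteq A$ since $D \subseteq (C_2^\perp)_A$. Hence $M_{t-k+1}(C_2^\perp, C_1^\perp) \le \#\mathrm{Supp}(D) \le \#A$, which combined with~(\ref{eqbndsecond}) contradicts the hypothesis~(\ref{eqstrictly}).

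For the sharpness claim I would select a subspace $D \subseteq C_2^\perp$ attaining $M_{t-k+1}(C_2^\perp, C_1^\perp)$ (so $\dim D = t-k+1$ and $D \cap C_1^\perp = \{\vec{0}\}$) and set $A := \mathrm{Supp}(D)$. Then $D \subseteq (C_2^\perp)_A$ and $D \cap (C_1^\perp)_A \subseteq D \cap C_1^\perp = \{\vec{0}\}$, yielding $\dim (C_2^\perp)_A - \dim (C_1^\perp)_A \ge t-k+1$; via the translation step this precisely blocks the existence of $k$ such linearly independent vectors. The main obstacle I anticipate is purely bookkeeping: keeping straight the conventions $C_A$ versus $C_{\bar{A}}$ and the direction in which Forney's two lemmas are applied, together with verifying that $C_{1,\bar{A}} \cap C_2 = C_{2,\bar{A}}$ and $(C_1^\perp)_A = (C_2^\perp)_A \cap C_1^\perp$; once these identities are nailed down the argument is a short composition of already-proved results.
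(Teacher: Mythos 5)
Your proposal is correct and follows essentially the same route as the paper: you translate the existence of $k$ independent vectors vanishing on $A$ via Forney's lemmas (\ref{forney1}) and (\ref{forney2}) into the inequality $\dim(C_2^\perp)_A - \dim(C_1^\perp)_A \le t-k$, relate this to $M_{t-k+1}(C_2^\perp, C_1^\perp)$ by the definition of the relative generalized Hamming weight, and invoke (\ref{eqbndsecond}). The only stylistic difference is that you argue the existence claim by contrapositive while the paper computes the dimension count $t - (\dim(C_2^\perp)_A - \dim(C_1^\perp)_A)$ directly; the substance and the lemmas used are identical.
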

\begin{proof}
We prove that for all $A \subseteq {\mathcal{I}}$ with $\# A <M_{t-k+1}(C_2^\perp, C_1^\perp)$ the result holds and that for some $A$ with $\# A =M_{t-k+1}(C_2^\perp, C_1^\perp)$ it does not. Combining this with~(\ref{eqbndsecond}) (where we substitute $k$ with $t-k+1$) finalizes the proof. Given $A$ one can write the number of linearly independent vectors in $C_1 \backslash C_2$ being identically equal to $0$ on $A$ as
\begin{eqnarray}
&&\dim (C_1)_{\bar{A}}-\dim (C_2)_{\bar{A }} \nonumber \\
&=&(k_1-\dim P_A(C_1))-(k_2-\dim P_A(C_2)) \nonumber \\
&=& t -\left( \dim {{P}}_A(C_1)-\dim P_A(C_2) \right) \nonumber \\
&=&t - \left( (\# A-\dim (C_1^\perp)_A)-(\# A-\dim (C_2^\perp)_A) \right)  \nonumber \\
&=&t- \left( \dim(C_2^\perp)_A-\dim(C_1^\perp)_A  \right) \nonumber \\
&=&t - \max \{ \dim D \mid D \subseteq C_2^\perp, D\cap C_1^\perp =\{\vec{0}\}, {\mbox{Supp}}(D) \subseteq A\} \nonumber
\end{eqnarray}
where we used Forney's two duality lemmas~(\ref{forney1}) and (\ref{forney2}).
By the very definition of relative generalized Hamming weights for any $A \subseteq {\mathcal{I}}$ of size strictly less than $M_{t-k+1}(C_2^\perp, C_1^\perp)$ we have that 
$$\max \{ \dim D \mid D\subseteq C_2^\perp, D \cap C_1^\perp=\{ \vec{0} \}, {\mbox{Supp}}(D)  \subseteq A\}$$
is strictly smaller than  $t-k+1$ and that for some $A \subseteq {\mathcal{I}}$ with $\# A = M_{t-k+1}(C_2^\perp, C_1^\perp)$ equality holds. This concludes the proof.
\end{proof}

\section{Proof of Theorem~\ref{theint}}\label{seccorollary1}
With the theory from Section~\ref{secclean} in place we are now ready to prove Theorem~\ref{theint}.\\

\noindent {\textit{Proof of Theorem~\ref{theint}}}\\
If ${\mathbb{F}}$ is a finite field, say with $q$ elements, then consider $A_1= \cdots =A_m={\mathbb{F}}_q$. Otherwise, let $q:=\mu(M_1, \ldots , M_t)$ and choose finite sets $A_1, \ldots , A_m \subseteq {\mathbb{F}}$ all of size at least $q$ in such a way that the $A$ in Theorem~\ref{theint} satisfies $A \subseteq A_1 \times \cdots \times A_m$. Write $a_i=\#A_i$, for $i=1, \ldots , m$, $n=a_1 \cdots a_m$ and $\{P_1, \ldots , P_n\}=A_1 \times \cdots \times A_m$. Define $G_i:=\prod_{b\in A_i} (X_i-b)$, for $i=1, \ldots , m$ and consider the ideal $I:=\langle G_1, \ldots , G_m\rangle$. The footprint is
$$\Delta_\prec(I)=\{ X_1^{i_1} \cdots X_m^{i_m} \mid 0 \leq i_j < a_j, {\mbox{ for }} j=1, \ldots , m\}$$
and by interpolation the map ${\mbox{ev}}: {\mathbb{F}}[X_1, \ldots , X_m]/I \rightarrow {\mathbb{F}}^n$ given by ${\mbox{ev}}(F+I)=(F(P_1), \ldots , F(P_n))$ is surjective. Preimage and image are of the same size, and therefore $\{{\mbox{ev}}(N+I) \mid N \in \Delta_\prec(I) \}$ is a basis for ${\mathbb{F}}^n$ as a vector space over ${\mathbb{F}}$. We enumerate the monomials of the footprint according to $\prec$ as $N_1 \prec \cdots \prec N_n$ and write  $\vec{b}_i={\mbox{ev}}(N_i+I)$. 
From the identity ${\mbox{ev}}(F+I) \ast {\mbox{ev}}(G+I)={\mbox{ev}}(FG+I)$ we immediately see that $\mu(N_{i_1}, \ldots , N_{i_k})\leq \bar{\mu}(i_1, \ldots ,i_k)$ and therefore the first part of Theorem~\ref{theint} follows from the first part of Theorem~\ref{thefull}.  In the remaining part of the proof we assume that ${\mathbb{F}}$ is the finite field ${\mathbb{F}}_q$. Using notation as in~\cite{bras2008duality} consider $W \subseteq \Delta_\prec (I)$ and $E(W):= {\mbox{Span}}_{\mathbb{F}_q} \{ {\mbox{ev}}(M+I) \mid M \in W\}$ and $C(W):= E(W)^\perp$. By~\cite[Prop.\ 2.4]{bras2008duality} whenever $W$ is divisor closed it holds that $C(W)=E(W^\perp)$ where 
\begin{equation}
W^\perp=\Delta_\prec(I) \backslash \{X_1^{q-1-e_1} \cdots X_m^{q-1-e_m} \mid X_1^{e_1} \cdots X_m^{e_m} \in W\}.\label{eqW}
\end{equation}
Here, by divisor closed we mean that if $M \in W$ and $N \mid M$ then also $N \in W$. 
Using the notation as in Theorem~\ref{theint} in combination with that of Theorem~\ref{thefull} define
\begin{eqnarray}
C_2&:=&{\mbox{ev}} \left( ({\mathcal{L}}_2:={\mbox{Span}}_{\mathbb{F}_q} \{ M \mid M \in \Box, M \prec M_1\})+I \right) \nonumber \\
C_1&:=&{\mbox{ev}} \left( ({\mathcal{L}}_1:={\mbox{Span}}_{\mathbb{F}_q} \{ M \mid M \in \Box, M \preceq M_t\})+I \right) \nonumber 
\end{eqnarray}
where of course ${\mbox{lm}}({\mathcal{L}}_1 \backslash {\mathcal{L}}_2)=\{ M_1, \ldots , M_t\}$. From~(\ref{eqW}) we obtain
\begin{eqnarray}
C_2^\perp&=&{\mbox{ev}} \left( ({\mathcal{L}}_2^\perp:={\mbox{Span}}_{\mathbb{F}_q} \{ M \mid M \in \Box, M \preceq \dfrac{K}{M_1}\})+I \right) \nonumber \\
C_1^\perp&=&{\mbox{ev}} \left( ({\mathcal{L}}_1^\perp:={\mbox{Span}}_{\mathbb{F}_q} \{ M \mid M \in \Box, M \prec \dfrac{K}{M_t}\})+I \right) \nonumber
\end{eqnarray}
where $K:=X_1^{q-1}  \cdots X_m^{q-1}$.  Clearly ${\mbox{lm}}({\mathcal{L}}^\perp_2 \backslash {\mathcal{L}}_1^\perp)=\left\{ \dfrac{K}{M_t}, \ldots , \dfrac{K}{M_1}\right\}$ and the last part of Theorem~\ref{theint} follows from the last part of Theorem~\ref{thefull} in combination with the last part of Corollary~\ref{corsamesame}. 
\begin{flushright}{$\Box$} \end{flushright}

\begin{remark}
Inspecting the above proof of Theorem~\ref{theint} one can fill in the details missing in Remark~\ref{remconnect}.
\end{remark}

\noindent {\textit{Alternative proof of Corollary~\ref{corsamesame}}}\\
Using similar terminology as in the previous proof one can demonstrate that Corollary~\ref{corsamesame} is a consequence of (\ref{eqbndfirst}). We leave the details for the reader.
\begin{flushright}{$\Box$} \end{flushright}

\section{Concluding remarks}\label{secconclusion}
In this paper we proposed a unified theory for common affine roots of general sets of multivariate polynomials over any field, except we did not treat multiplicity. For multivariate polynomials there are several concepts of multiplicity and for more of those a footprint bound has been formulated~\cite[Ch.\ 4, Cor.\ 2.5]{clo2} and \cite{pellikaan2004listextended,geil2019bounding} as well as other types of bounds~\cite{dvir2013extensions,geil2017more}. To the best of our knowledge not much has been done regarding the interpolation problem (II) taking multiplicity into account. We leave it for further research to establish such results along the line of Theorem~\ref{theint} and to link them to the above mentioned solutions of (I) in a fashion similar to Remark~\ref{remconnect}.

We further pose it as a research problem to enhance our findings to hold for integral domains following up on~\cite{bishnoi2018zeros,rote2023generalized} and possibly also to treat skew fields and other non-commutative algebraic structures where the OWB point of view in Section~\ref{secclean} may prove essential.
Similarly it would be interesting to see if some of the methods in the present paper could be applied to treat projective points beyond what is already in the literature e.g.\ in~\cite{datta2017number,beelen2018maximum,beelen2019vanishing,beelen2022combinatorial,san2024recursive,nardi2025maximum,singhal2025}.

We note that like other recent results on roots of sets of multivariate polynomials, e.g.~\cite{eisenbud2000projective}, also the present paper is related to work in coding theory and cryptography. For this particular paper the inspiration came from work on ramp secret sharing schemes~\cite{geil2026considerate}. Elaborating further on this relationship we observe, that Theorem~\ref{thefull} should have interesting implications for one-point algebraic geometric codes~\cite[Sec.\ II]{stichtenoth} and other code constructions where one can obtain information on the functions $\bar{\sigma}$ as well as $\bar{\mu}$.\\

The author sincerely thanks Anurag Bishnoi for a fruitful email correspondence during the making of this paper.


\end{document}